\theoremstyle{plain}
\newtheorem{Theorem}{Theorem}[section] 
\newtheorem{Proposition}[Theorem]{Proposition}
\newtheorem{Remark}[Theorem]{Remark}
\newtheorem{Example}[Theorem]{Example}
\newtheorem{conjecture}[Theorem]{Conjecture}
\numberwithin{equation}{section} 
\begin{document}

\begin{center}
 \textbf{Theory on new fractional operators using normalization and probability tools}
\end{center}

\begin{center}
 Marc Jornet
\end{center}

\begin{center}
Departament de Matem\`atiques, Universitat de Val\`encia, 46100 Burjassot, Spain. \\
email: marc.jornet@uv.es \\
ORCID: 0000-0003-0748-3730
\end{center}

\ \\
\textit{Reference:} Fractal Fract. 2024, 8(11), 665; https://doi.org/10.3390/fractalfract8110665

\ \\
\textbf{Abstract.} We show how a rescaling of fractional operators with bounded kernels may help circumvent their documented deficiencies, for example, the inconsistency at zero or the lack of inverse integral operator. On the other hand, we build a novel class of linear operators with memory effects to extend the L-fractional and the ordinary derivatives, using probability tools. A Mittag-Leffler-type function is introduced to solve linear problems, and nonlinear equations are addressed with power series, illustrating the methods for the SIR epidemic model. The inverse operator is constructed, and a fundamental theorem of calculus and an existence-and-uniqueness result for differintegral equations are proved. A conjecture on deconvolution is raised, that would permit completing the proposed theory. \\
\\
\textit{Keywords: fractional calculus; rescaled operator; probabilistic operator; fractional differential equation; Caputo and L-fractional derivatives; singular and non-singular kernels} \\
\\
\textit{AMS Classification 2020: 34A08; 34A25; 60E05; 33E12}

\section{Introduction} \label{sec_intro}

Fractional calculus is concerned with the investigation of linear operators that extend the ordinary derivative by including some sort of memory effect, for example, a continuous delay through integration. These operators are called fractional derivatives, because they normally depend on a real positive parameter, the fractional order or index, that gives standard derivatives at integer positive values. If the fractional order is not explicitly written, then one has a memory operator. Given these types of operators, new differential or differintegral equations can be defined, that include past history in the system to exhibit non-local properties. Their study, compared to ordinary differential equations, requires the development of new results on existence and uniqueness, explicit solutions (power series, Laplace transforms), and numerical methods. The reader is referred to the monographs \cite{podl_llibre,kilbas,diethelm_llibre,abbas,yong,ascione}, the review papers \cite{tenreiro,ortigueira,teodoro,dieth2}, and the research articles \cite{logistic_nieto,ovidio,compart_meu,kexue,garrap1}.

The most famous fractional derivatives are Riemann-Liouville and Caputo. Although the latter was proposed decades ago in viscoelasticity theory~\cite{caputo}, they are of use in current research, both pure and applied~\cite{lapl2,nou4,nou5}. In the definition of the operators, the past delay is incorporated by integrating with respect to a singular kernel (i.e., a function with infinite value at the endpoint of the interval). Related to the type of kernel, efforts are also being devoted to non-singular integrators, for example, bounded or continuous~\cite{singul1,singul2}. However, this kind of fractional operators has certain disadvantages, as proved in some works~\cite{dieth}.

In this paper, we consider and modify these operators proposed in the literature. We use the generic notation $Dx(t)$ here, for the operator $D$ and its evaluation at functions $x$, where $t$ is the independent variable (the time). After discussing some disadvantages of classical forms of $D$, we investigate whether dividing by $Dt$ improves the properties of the operator:
\begin{equation} \tilde{D}x(t):=\frac{Dx(t)}{Dt}. \label{tildeG_gene} \end{equation}
We also address the features of the associated fractional differential equation,
\begin{equation} \tilde{D}x(t)=f(t,x(t)). \label{tildeG_gene22} \end{equation}
In general, with~\eqref{tildeG_gene} and~\eqref{tildeG_gene22}, we will see that the values of $x'(0)$ and $\tilde{D}x(0)$ are more consistent, and that the vector field $f$ has units time$^{-1}$. For example, if $D$ contains a non-singular kernel, we will see that the units of $Dx(t)$ are time$^0$, which entails serious drawbacks, but the division by $Dt$ gives rise to a consistent rate time$^{-1}$. When $D$ has a singular kernel, for instance of Caputo type, we will see that the division by $Dt$ changes the units time$^{-\alpha}$ to time$^{-1}$, rendering alternative properties and operators. Even though the normalization with $Dt$ may seem simple, it requires the building of a new theory on fractional calculus, especially with regard to the search of explicit and closed-form solutions. In addition, it provides insight on how to design a general probabilistic definition of operator with memory that extends the ordinary derivative.

We focus on the one hand on the Caputo operator $D={}^C\! D^\alpha$, for fractional order $\alpha\in (0,1)$, due to its accepted used in the literature and its good properties when working with initial states, power series, and Laplace transforms in differential equations. It satisfies ${}^C\! D^{0^+} x(t)=x(t)-x(0)$ and, for fractional differential equations, $|x'(0)|=\infty$ and units time$^{-\alpha}$. The infinite rate of change of the dynamics at $t=0$ and the fractional units may be a problem when dealing with Caputo models, hence the need of rescaling. The normalization gives rise to the L-fractional derivative, ${}^L\! D^\alpha$, suggested a few years ago in the context of geometry and mechanics~\cite{lazo_linear,lazo_linear2} and recently studied from the mathematical-analysis point of view~\cite{arx_jo,aml_jo}. We aim at generalizing the L-fractional operator, from its probabilistic interpretation as the expectation of a certain beta-distributed delay. Albeit not the subject of our contribution, the Riemann-Liouville derivative can also be rescaled, by employing the $\Lambda$-fractional operator~\cite{lambda1}. On the other hand, besides Caputo operators, we also devote work to operators with bounded kernels, also named non-singular. The normalization of operators with bounded kernels may circumvent some of the (right) critiques to their use, such as the inconsistency at zero or the lack of inverse integral operator~\cite{dieth}. We examine the usual exponential and Mittag-Leffler kernels.

The organization of the paper is the following. In Section~\ref{subsec1_norm}, we revisit the mathematical treatment on L-fractional calculus, from~\cite{arx_jo}. In Section~\ref{subsec2_norm}, we develop novel theory on fractional operators with bounded kernels. We study how the rescaling improves their properties, for an appropriate pure and applied use. In Section~\ref{new_sec_memo}, we build a theory on new operators with memory effects, defined with probability objects. These operators extend the L-fractional and the ordinary derivatives, and they are fractional in some examples. An alternative Mittag-Leffler function is introduced. Finally, Section~\ref{sec_conc} gives limitations of the paper with open problems for the future.

Concerning notation, we denote by $\mathrm{L}^p$ the Lebesgue spaces and by $\mathcal{C}^p$ the set of functions with continuous derivatives up to order $p$. The set of continuous functions is $\mathcal{C}$. All integrals are considered in the Lebesgue sense. The norms of functions and operators use $\|\cdot\|_\ast$, where $\ast$ indicates the space. The absolute value of real numbers ($\mathbb{R}$), the modulus of complex numbers ($\mathbb{C}$), and the norms of real and complex vectors and matrices are simply expressed with $|\cdot|$. Given a probability space, the expectation is written as $\mathbb{E}[\cdot]$. When the expectation is performed with respect to a certain random variable $U$, we use $\mathbb{E}_U[\cdot]$. The essential-supremum norm, in different spaces, is $\|\cdot\|_\infty$. The composition is denoted with $\circ$. The symbol $\sim$ captures both asymptotic behavior of functions and distribution of random variables.

\section{The L-fractional derivative} \label{subsec1_norm}

This section reviews concepts from~\cite{arx_jo}, which builds a theory on L-fractional operators and linear fractional differential equations. There are several open problems there, which might of interest for readers. For nonlinear equations, one may consult~\cite{aml_jo}.

\subsection{Caputo definition}

The Caputo fractional derivative is
\begin{equation} {}^C\! D^\alpha x(t)={}^{RL}\! J^{1-\alpha} x'(t)=\frac{1}{\Gamma(1-\alpha)}\int_0^t \frac{x'(\tau)}{(t-\tau)^\alpha}\mathrm{d}\tau,
\label{derC} \end{equation}
where
\begin{equation}
 {}^{RL}\! J^{1-\alpha} x(t)=\frac{1}{\Gamma(1-\alpha)}\int_0^t (t-\tau)^{-\alpha}x(\tau)\mathrm{d}\tau
 \label{rl_intg}
\end{equation}
is the Riemann-Liouville integral, $\alpha\in (0,1)$ is the fractional order, and
\[ \Gamma(z)=\int_0^\infty \tau^{z-1}\mathrm{e}^{-\tau}\mathrm{d}\tau \]
is the gamma function (which extends the factorial). The operator~\eqref{rl_intg} is defined for Lebesgue integrable functions $x:[0,T]\rightarrow\mathbb{C}$, i.e., $x\in\mathrm{L}^1[0,T]$; the new function ${}^{RL}\! J^{1-\alpha} x$ exists almost everywhere and belongs to $\mathrm{L}^1[0,T]$, by the properties of the convolution. The Caputo derivative~\eqref{derC} takes absolutely continuous functions $x:[0,T]\rightarrow\mathbb{C}$. Recall that $x$ is said to be absolutely continuous if it is continuous, its derivative $x'$ exists almost everywhere and belongs to $\mathrm{L}^1[0,T]$, and Barrow's rule is satisfied in a Lebesgue sense, $x(t)-x(0)=\int_0^t x'(s)\mathrm{d}s$.

\subsection{L-fractional definition}

The L-fractional derivative is defined as the normalization of the Caputo operator,
\begin{equation} {}^L\! D^\alpha x(t)=\frac{{}^C\! D^\alpha x(t)}{{}^C\! D^\alpha t}=\frac{\Gamma(2-\alpha)}{t^{1-\alpha}}\,{}^C\! D^\alpha x(t). \label{derL} \end{equation}
For absolutely continuous functions $x:[0,T]\rightarrow\mathbb{C}$, the function ${}^L\! D^\alpha x$ exists almost everywhere and is in $\mathrm{L}^1[0,T]$. For $\alpha\in [0,1]$, it interpolates between 
\[ \frac{x(t)-x(0)}{t}=\frac{1}{t}\int_0^t x'(s)\mathrm{d}s, \]
which is the mean value of the velocity on $[0,T]$, and the ordinary derivative $x'(t)$ if $x\in\mathcal{C}^1[0,T]$.

When $x\in\mathcal{C}^2[0,T]$, the operator can be rewritten as
\begin{equation} {}^L\! D^\alpha x(t)=x'(0)+\frac{1}{t^{1-\alpha}}\int_0^t (t-\tau)^{1-\alpha}x''(\tau)\mathrm{d}\tau, \label{ja_ve_tr} \end{equation}
pointwise on $(0,T]$. The kernel $(t-\tau)^{1-\alpha}$ is non-singular, although the denominator $t^{1-\alpha}$ controls the value of ${}^L\! D^\alpha x(0+)$ to avoid inconsistencies: when $x\in\mathcal{C}^3[0,T]$, for example, we have
\begin{equation} {}^L\! D^\alpha x(0):=\lim_{t\rightarrow 0^+} {}^L\! D^\alpha x(t)=x'(0). \label{ja_ve_tr2} \end{equation}
In that case, ${}^L\! D^\alpha x\in\mathcal{C}[0,T]$. 

Considering~\eqref{derL}, an L-fractional differential equation is
\begin{equation} {}^L\! D^\alpha x(t)=f(t,x(t)), \label{LEDO} \end{equation}
for $t\in (0,T]$, with an initial condition or state $x(0)=x_0$, where $f:[0,T]\times \Omega\subseteq [0,T]\times\mathbb{R}^d\rightarrow\mathbb{R}^d$, or $f:[0,T]\times \Omega\subseteq [0,T]\times\mathbb{C}^d\rightarrow\mathbb{C}^d$, is a continuous function such that $x_0\in\Omega$. The equation~\eqref{LEDO} can be understood almost everywhere or everywhere on $[0,T]$, depending on whether $x$ is smooth and~\eqref{ja_ve_tr} and~\eqref{ja_ve_tr2} are of use. 

In contrast to Caputo differential equations,~\eqref{LEDO} may present smooth solutions, such as power series
\begin{equation} \sum_{n=0}^\infty x_n t^n. \label{ps_genessdz} \end{equation}
The powers are ordinary, $t^n$, instead of the fractional powers in the Caputo setting, $t^{\alpha n}$. This is related to the fact that the units of measurement in~\eqref{LEDO} are time$^{-1}$, instead of time$^{-\alpha}$. For smooth solutions, note that the property $x'(0)\in (-\infty,\infty)$ is obtained, by~\eqref{ja_ve_tr2}. Besides, the result~\eqref{ja_ve_tr2} also tells us that~\eqref{LEDO} is, locally around $t=0$, very similar to $x'(t)=f(t,x(t))$, so the change of the dynamics with $\alpha$ is smoother than in Caputo equations~\cite{aml_jo}.

There is a Mittag-Leffler-type function in this context, alternative to the classical one~\cite{mainardi}:
\begin{equation} 
\begin{split}
\mathcal{E}_{\alpha}(s)= {} & \sum_{n=0}^\infty \frac{s^n}{\Gamma(2-\alpha)^n \prod_{j=1}^n \frac{\Gamma(j+1)}{\Gamma(j+1-\alpha)}} \\
= {} & \sum_{n=0}^\infty \frac{s^n}{\Gamma(2-\alpha)^n\Gamma(1+\alpha)^n \prod_{j=1}^n \binom{j}{j-\alpha}}, 
\end{split}
\label{mlf2} 
\end{equation}
for $s\in\mathbb{C}$. The solution of
\begin{equation} {}^L\! D^\alpha x=\lambda x \label{simplest_L} \end{equation}
where $\lambda\in\mathbb{C}$ and $t\geq0$, is
\begin{equation} x(t)=\mathcal{E}_{\alpha}(\lambda t)x_0. \label{xeat} \end{equation}
Expressions~\eqref{mlf2}, \eqref{simplest_L} and~\eqref{xeat} can be defined for matrix arguments $\lambda=A\in\mathbb{C}^{d\times d}$ as well, $d\geq1$.

The integral operator associated to ${}^L\! D^\alpha$ is
\begin{equation}
 {}^L\! J^\alpha x(t)= \frac{1}{\Gamma(\alpha)\Gamma(2-\alpha)}\int_0^t (t-s)^{\alpha-1}s^{1-\alpha}x(s)\mathrm{d}s.
 \label{convolL}
\end{equation}
If $x\in\mathrm{L}^1[0,T]$, then ${}^L\! J^\alpha x\in\mathrm{L}^1[0,T]$. If $x$ is continuous on $[0,T]$, then ${}^L\! J^\alpha x$ is well defined everywhere on $[0,T]$. In fact, ${}^L\! J^\alpha:\mathcal{C}[0,T]\rightarrow\mathcal{C}[0,T]$ is a continuous operator, with norm
\[ \|{}^L\! J^\alpha\|_\infty \leq T. \]

\subsection{Connection with probability theory}

The link of L-fractional calculus with probability theory is the following: given~\eqref{derL} and~\eqref{convolL}, we have
\begin{equation} {}^L\! J^\alpha y(t)=t \mathbb{E}[y(tV)] \label{proba_L} \end{equation}
and
\begin{equation} {}^L\! D^\alpha y(t)=\mathbb{E}[y'(tW)], \label{memo_L} \end{equation}
where $V$ is a random variable with distribution $\mathrm{Beta}(2-\alpha,\alpha)$, $W$ is a random variable with distribution $\mathrm{Beta}(1,1-\alpha)$, and $\mathbb{E}$ is the expectation operator. 

In this paper, we will investigate fractional operators from the key property~\eqref{memo_L}.

\section{Fractional operators with bounded kernels} \label{subsec2_norm}

This part presents novel theory on fractional operators with non-singular kernels, based on rescaling. Our case studies are exponential and Mittag-Leffler integrators.

\subsection{Exponential kernel}

The Caputo-Fabrizio operator~\cite{cf} is defined as
\begin{equation} {}^{CF}\! D^\alpha x(t)=\frac{1}{1-\alpha}\int_0^t \mathrm{e}^{-\frac{\alpha}{1-\alpha}(t-s)}x'(s)\mathrm{d}s, 
\label{defi1_cf} 
\end{equation}
often in the context of continuously differentiable functions $x:[0,T]\rightarrow\mathbb{C}$. Notice that the kernel 
\begin{equation} \mathcal{K}(t-s)=\frac{1}{1-\alpha}\mathrm{e}^{-\frac{\alpha}{1-\alpha}(t-s)} \label{exp_kernel_cf}
\end{equation}
is non-singular, because it does not present any singularity (vanishing denominator, for example) when $t=s$. The corresponding integral operator is
\begin{equation}
 {}^{CF}\! J^\alpha x(t)=(1-\alpha)(x(t)-x(0))+\alpha\int_0^t x(s)\mathrm{d}s,
 \label{init1_cff}
\end{equation}
for continuous functions $x:[0,T]\rightarrow\mathbb{C}$. It is a convex combination between the discrete change $x(t)-x(0)$ and the continuous change $\int_0^t x(s)\mathrm{d}s$.

Although the use of a non-singular kernel may seem appealing, there are many drawbacks associated to the Caputo-Fabrizio operator, which are common in fractional calculus with bounded or continuous kernels~\cite{dieth}. First, it can be proved~\cite{losada,losada2} that
\begin{equation} {}^{CF}\! J^\alpha\circ {}^{CF}\! D^\alpha x(t)=x(t)-x(0) \label{first_one_cf} \end{equation}
and
\begin{equation}  {}^{CF}\! D^\alpha\circ {}^{CF}\! J^\alpha x(t)=x(t)-\mathrm{e}^{-\frac{\alpha}{1-\alpha}t}x(0). \label{second_one_cf} \end{equation}
Note that~\eqref{first_one_cf} mimics Barrow's rule, but~\eqref{second_one_cf} does not correspond to the fundamental theorem of calculus: the derivative of the integral is not the identity operator. This fact is related with the null space of~\eqref{init1_cff}, given by 
\[ \langle \mathrm{e}^{-\frac{\alpha}{1-\alpha}t} \rangle, \]
where $\langle \cdot \rangle$ denotes the linear span. Second, when proposing a model of the form
\begin{equation}
 {}^{CF}\! D^\alpha x(t)=f(t,x(t)),
 \label{edo_cf_co}
\end{equation}
there is a clear contradiction at $t=0$, since
\[ {}^{CF}\! D^\alpha x(0)=0 \]
is always satisfied. Hence the only possibility is to work with the integral problem 
\begin{equation} x(t)=x_0+{}^{CF}\! J^\alpha f(t,x(t)), \label{veig_gran}
\end{equation}
which is not entirely equivalent to~\eqref{edo_cf_co}, due to~\eqref{second_one_cf}. And third, the units of~\eqref{defi1_cf} are time$^{0}$, i.e., it has no time units. Indeed, the units of $x'(s)$ cancel out with those of $\mathrm{d}s$, while the exponential kernel~\eqref{exp_kernel_cf} is non-singular and does not behave as time$^{-\beta}$ for any index $\beta>0$: by Taylor expansion,
\[ \mathrm{e}^{-\frac{\alpha}{1-\alpha}(t-s)}=1-\frac{\alpha}{1-\alpha}(t-s)+\frac12 \left(\frac{\alpha}{1-\alpha}\right)^2(t-s)^2+\ldots. \]

These disadvantages can be resolved by working with the normalized version of~\eqref{defi1_cf}. Simple computations yield
\begin{equation} {}^{CF}\! D^\alpha t=\frac{1}{\alpha}\left(1-\mathrm{e}^{-\frac{\alpha}{1-\alpha}t}\right)\sim \frac{1}{1-\alpha}t, \label{Dt_cf}
\end{equation}
where $\sim$ denotes here that the two functions are asymptotically equivalent when $t\rightarrow0^+$, which is the problematic point. Considering~\eqref{Dt_cf}, we define
\[ c_\alpha(t)=\frac{1}{\alpha}\left(1-\mathrm{e}^{-\frac{\alpha}{1-\alpha}t}\right) \]
and
\begin{equation}
 {}^{NCF}\! D^\alpha x(t)=\frac{1}{(1-\alpha)c_\alpha(t)}\int_0^t \mathrm{e}^{-\frac{\alpha}{1-\alpha}(t-s)}x'(s)\mathrm{d}s. 
\label{defi222_cf} 
\end{equation}
Notice that convergence towards $x'(t)$ when $\alpha\rightarrow 1^-$ holds for~\eqref{defi222_cf}. Also,
\begin{equation} {}^{NCF}\! D^\alpha x(0^+)=\frac{x(t)-x(0)}{t}, \label{molt_promk} \end{equation}
which is the mean value
\[ \frac{1}{t}\int_0^t x'(s)\mathrm{d}s. \]
That is,~\eqref{defi222_cf} interpolates between the mean velocity on $[0,t]$ and at $t$.

It is clear that the units of~\eqref{defi222_cf} are time$^{-1}$, by the division by $c_\alpha(t)$. Therefore, the new operator represents some sort of rate of change, in contrast to the standard one~\eqref{defi1_cf}. Furthermore, now~\eqref{defi222_cf} is a convolution with respect to a probability density function on $[0,t]$, for each $t>0$, so that the weight function seems to be more appropriate.

Definition~\eqref{defi222_cf} is well posed for $t=0$:
\begin{equation}
 {}^{NCF}\! D^\alpha x(0)=\lim_{t\rightarrow0^+} \mathrm{e}^{-\frac{\alpha}{1-\alpha}t}\cdot \frac{1}{t}\int_0^t \mathrm{e}^{\frac{\alpha}{1-\alpha}s}x'(s)\mathrm{d}s=x'(0). 
\label{defi222_cf_Limit} 
\end{equation}
Even for $\alpha=0$, it is true that ${}^{NCF}\! D^\alpha x(0)=x'(0)$, by~\eqref{molt_promk}. In~\eqref{defi222_cf_Limit}, we used $(1-\alpha)c_\alpha(t)\sim t$ and decomposed the exponential function to readily apply the fundamental theorem of calculus, but for a general continuous kernel, one can proceed with Leibniz rule of differentiation for integrals or integration by parts and arrive at the same result. Thus, the new fractional operator has an appropriate value at $t=0$, the same as the integer-order one, and one can work with fractional differential equations of the form
\begin{equation} {}^{NCF}\! D^\alpha x(t)=f(t,x(t)), \label{prem_ncf} \end{equation}
as opposed to~\eqref{edo_cf_co}. The associated integral operator (see~\eqref{init1_cff}),
\begin{equation} {}^{NCF}\! J^\alpha x(t)=(1-\alpha)c_\alpha(t)x(t)+\alpha \int_0^t c_\alpha(s) x(s)\mathrm{d}s, \label{assoc_cf_insi} \end{equation}
does satisfy the fundamental theorem of calculus, in contrast to~\eqref{second_one_cf}: for a continuously differentiable function $x$,
\begin{align*}
 {}^{NCF}\! J^\alpha\circ {}^{NCF}\! D^\alpha x(t)= {} & {}^{NCF}\! J^\alpha\left[\frac{1}{c_\alpha}\cdot {}^{CF}\! D^\alpha x\right](t) \\
= {} & {}^{CF}\! J^\alpha \circ {}^{CF}\! D^\alpha x \\
= {} & x(t)-x(0) 
\end{align*}
and
\begin{equation} 
\begin{split}
{}^{NCF}\! D^\alpha\circ {}^{NCF}\! J^\alpha x(t)= {} & {}^{NCF}\! D^\alpha \circ {}^{CF}\! J^\alpha [c_\alpha x](t) \\
= {} & \frac{1}{c_\alpha(t)}\cdot{}^{CF}\! D^\alpha \circ {}^{CF}\! J^\alpha [c_\alpha x](t) \\
= {} & \frac{1}{c_\alpha(t)} c_\alpha(t)x(t) \\
= {} & x(t), 
\end{split}
\label{comp_finij} 
\end{equation}
since $c_\alpha(t)x(t)$ is $0$ at $t=0$. We employed both~\eqref{first_one_cf} and~\eqref{second_one_cf}. Observe that $t=0$ is not a problem for~\eqref{comp_finij}, by~\eqref{defi222_cf_Limit} and~\eqref{assoc_cf_insi}:
\[ {}^{NCF}\! D^\alpha\left[{}^{NCF}\! J^\alpha x\right](0)=\left({}^{NCF}\! J^\alpha x\right)'(0)=x(0). \]
Equation~\eqref{prem_ncf} is now completely equivalent to
\begin{equation} x(t)=x_0 +{}^{NCF}\! J^\alpha f(t,x(t)), \label{dsfdjkcF} \end{equation}
in the set of $\mathcal{C}^1[0,T]$ functions.

We do not enter into other possible issues regarding Caputo-Fabrizio models. For example,~\eqref{veig_gran} is equivalent to an ordinary differential equation, by differentiating. The same fact occurs for the alternative model~\eqref{prem_ncf}, which gives
\begin{equation}
\begin{split} x'(t)= {} & (1-\alpha)c_\alpha'(t)f(t,x(t)) \\
{} & +(1-\alpha)c_\alpha(t)[f_t(t,x(t))+x'(t)f_x(t,x(t))]+\alpha c_\alpha(t)f(t,x(t)),
\end{split}
\label{edo_Cff}
\end{equation}
by~\eqref{assoc_cf_insi} and~\eqref{dsfdjkcF}, so the non-local behavior of~\eqref{prem_ncf} is debatable. As~\eqref{edo_Cff} is well understood, fractional models with exponential kernel do not seem to give rise to a big new theory, except concrete mathematical results. When we convolve the derivative $x'$ with an exponential in~\eqref{defi1_cf}, the exponential function decomposes as a product and separates the variables $t$ (outside the integral) and $s$ (inside the integral), and this gives rise to an ordinary differential equation at the end. Since the weight function in~\eqref{defi222_cf} is related to the exponential probability distribution, which exhibits the memoryless property, it is intuitive that the non-local behavior of~\eqref{prem_ncf} is lost.

\begin{Example} \normalfont
The model 
\begin{equation} {}^{CF}\! D^\alpha x(t)=-\lambda x(t), 
\label{cfex}
\end{equation}
where $t>0$ and $\lambda>0$, extends the exponential decay equation to a fractional setting, for $\alpha\in (0,1]$. First, observe that, if $x(0)=x_0\neq0$, then~\eqref{cfex} is not well-defined, because ${}^{CF}\! D^\alpha x(0)=0\neq -\lambda x_0$. Besides, as already proved, the units of ${}^{CF}\! D^\alpha x(t)$ are time$^0$, hence there is no valid power for $\lambda$; it does not represent any rate. Finally, the differintegral equation~\eqref{cfex} is not equivalent to the integral equation $x(t)=x_0+{}^{CF}\! J^\alpha[-\lambda x(t)]$, by~\eqref{second_one_cf}. The alternative model
\begin{equation} {}^{NCF}\! D^\alpha x(t)=-\lambda x(t)
\label{cfex2}
\end{equation}
fixes all these issues. It is consistent at $t=0$, with ${}^{NCF}\! D^\alpha x(0)=x'(0)=-\lambda x_0$, the units of ${}^{NCF}\! D^\alpha x(t)$ and $\lambda$ are time$^{-1}$, thus representing true rates, and finally, the differintegral equation~\eqref{cfex2} is equivalent to the integral equation $x(t)=x_0+{}^{NCF}\! J^\alpha[-\lambda x(t)]$, because the fundamental theorem of calculus holds. The argumentation for this simple example~\eqref{cfex2} works for any other model defined via a non-singular kernel. During the time the present paper has been in the preprint server ArXiv and under review, specific models based on my proposed normalized operators are being studied~\cite{balen_nou}.
\end{Example}

\subsection{Mittag-Leffler kernel}

Other possible operators with bounded kernels are based on Mittag-Leffler kernels, instead of exponential ones~\cite{atang,prab_area}. For example,
\begin{equation} {}^{AB}\! D^\alpha x(t)=\frac{1}{1-\alpha}\int_0^t E_\alpha\left(-\frac{\alpha}{1-\alpha}(t-s)^\alpha\right)x'(s)\mathrm{d}s. \label{AB_ker} \end{equation}
Here one often considers absolutely continuous functions $x:[0,T]\rightarrow\mathbb{C}$. The kernel
\[ \mathcal{K}(t-s)=\frac{1}{1-\alpha}E_\alpha\left(-\frac{\alpha}{1-\alpha}(t-s)^\alpha\right) \]
is non-singular. Observe that, again,
\[ {}^{AB}\! D^\alpha x(0)=0, \]
which is a severe restriction in differential equations (exactly the same issue as for Caputo-Fabrizio models)~\cite{dieth}. The units of~\eqref{AB_ker} are also time$^{0}$ (the use of the exponent $\alpha$ does not change this fact). Since
\[ {}^{AB}\! D^\alpha t^{\gamma}=\frac{1}{1-\alpha}\Gamma(1+\gamma)t^\gamma E_{\alpha,\gamma+1}\left(-\frac{\alpha}{1-\alpha}t^\alpha\right),
\]
where $\gamma>0$, the rescaled operator associated to~\eqref{AB_ker} is
\begin{equation} {}^{NAB}\! D^\alpha x(t)=\frac{1}{(1-\alpha)k_\alpha(t)}\int_0^t E_\alpha\left(-\frac{\alpha}{1-\alpha}(t-s)^\alpha\right)x'(s)\mathrm{d}s, \label{AB_kerr2} \end{equation}
being
\[ k_\alpha(t)=\frac{1}{1-\alpha}tE_{\alpha,2}\left(-\frac{\alpha}{1-\alpha}t^\alpha\right)\sim \frac{1}{1-\alpha}t. \]
The new fractional operator~\eqref{AB_kerr2} satisfies
\[ {}^{NAB}\! D^\alpha x(0)=x'(0) \]
when calculating the limit as $t\rightarrow0^+$, if $x'$ is continuous, by Leibniz rule of differentiation for integrals:
\begin{align*}
\lim_{t\rightarrow0^+} {}^{NAB}\! D^\alpha x(t)= {} & \lim_{t\rightarrow0^+}\frac{1}{t}\int_0^t E_\alpha\left(-\frac{\alpha}{1-\alpha}(t-s)^\alpha\right)x'(s)\mathrm{d}s \\
= {} & \left.\frac{\mathrm{d}}{\mathrm{d}t}\right|_{t=0^+} \int_0^t E_\alpha\left(-\frac{\alpha}{1-\alpha}(t-s)^\alpha\right)x'(s)\mathrm{d}s \\
= {} & x'(0) + \frac{\alpha^2}{1-\alpha}\lim_{t\rightarrow0^+} \int_0^t (t-s)^{\alpha-1} E_\alpha'\left(-\frac{\alpha}{1-\alpha}(t-s)^\alpha\right)x'(s)\mathrm{d}s \\
= {} & x'(0).
\end{align*}
Hence there are no issues at $t=0$, compared with~\eqref{AB_ker}. The units of~\eqref{AB_kerr2} are time$^{-1}$. Now~\eqref{AB_kerr2} is a convolution with respect to a probability density function on $[0,t]$, for each $t>0$, so that the weight function is likely more adequate.

The integral operator of~\eqref{AB_ker} is
\begin{equation}  {}^{AB}\! J^\alpha x(t)=(1-\alpha)(x(t)-x(0))+\alpha\cdot {}^{RL}\! J^\alpha x(t),
\label{AB_integ_curt}
\end{equation}
which is a convex combination of the discrete change $x(t)-x(0)$ and the continuous delayed change ${}^{RL}\! J^\alpha x(t)$. From~\eqref{AB_integ_curt}, one deduces that
\begin{equation}  {}^{NAB}\! J^\alpha x(t)=(1-\alpha)k_\alpha(t)x(t)+\alpha\cdot {}^{RL}\! J^\alpha [k_\alpha x](t)
\label{pelletssi}
\end{equation}
is the integral counterpart of~\eqref{AB_kerr2}. It is well known that
\[ {}^{AB}\! J^\alpha\circ {}^{AB}\! D^\alpha x(t)=x(t)-x(0) \]
and
\[  {}^{AB}\! D^\alpha\circ {}^{AB}\! J^\alpha x(t)=x(t)-E_\alpha\left(-\frac{\alpha}{1-\alpha}t^\alpha\right)x(0)\neq x(t), \]
but now
\begin{align*}
 {}^{NAB}\! J^\alpha\circ {}^{NAB}\! D^\alpha x(t)= {} & {}^{NAB}\! J^\alpha\left[\frac{1}{k_\alpha(t)}\cdot {}^{AB}\! D^\alpha x\right](t) \\
= {} & {}^{AB}\! J^\alpha \circ {}^{AB}\! D^\alpha x \\
= {} & x(t)-x(0) 
\end{align*}
and
\begin{align*}
 {}^{NAB}\! D^\alpha\circ {}^{NAB}\! J^\alpha x(t)= {} & {}^{NAB}\! D^\alpha \circ {}^{AB}\! J^\alpha [k_\alpha x](t) \\
= {} & \frac{1}{t}\cdot{}^{AB}\! D^\alpha \circ {}^{AB}\! J^\alpha [k_\alpha x](t) \\
= {} & \frac{1}{k_\alpha(t)} k_\alpha(t)x(t) \\
= {} & x(t), 
\end{align*}
because $k_\alpha(0)x(0)=0$. Therefore, the fundamental theorem of calculus for the new operator ${}^{NAB}\! D^\alpha$ is verified.

We have the equivalence between 
\[ {}^{NAB}\! D^\alpha x(t)=f(t,x(t)) \]
and
\[ x(t)=x_0+{}^{NAB}\! J^\alpha f(t,x(t)), \]
in the set of $\mathcal{C}^1[0,T]$ functions. This model is the same as a system with implicit Caputo derivatives only,
\[ {}^{C}\! D^\alpha x(t)=(1-\alpha)\cdot {}^{C}\! D^\alpha(k_\alpha(t) f(t,x(t)))+\alpha k_\alpha(t) f(t,x(t)) \]
almost everywhere, by~\eqref{pelletssi} and the fundamental theorem of calculus. This fact is natural somehow, because the Mittag-Leffler function is directly related to Caputo fractional differential equations (analogously, the exponential function in the Caputo-Fabrizio model is directly related to ordinary differential equations): it builds the solution of the basic equation ${}^{C}\! D^\alpha z(t)=\lambda^\alpha z(t)$ (analogously, the exponential function builds the solution of the basic equation $z'(t)=\lambda z(t)$).

In conclusion, the factor $(Dt)^{-1}$ seems to resolve major issues associated to fractional operators with bounded kernels. Some authors proposed the use of integration by parts, directly on~\eqref{defi1_cf} and~\eqref{AB_ker}, see~\cite{dieth,baleanuP}:
\begin{equation} {}^{CF}\! \tilde{D}^\alpha x(t)=\frac{1}{1-\alpha}\left[ x(t)-\mathrm{e}^{-\frac{\alpha}{1-\alpha}t}x(0)-\frac{\alpha}{1-\alpha}\int_0^t \mathrm{e}^{-\frac{\alpha}{1-\alpha}(t-s)}x(s)\mathrm{d}s\right], \label{novds1} \end{equation}
\begin{equation} 
\begin{split}
{}^{AB}\! \tilde{D}^\alpha x(t)= {} & \frac{1}{1-\alpha}\left[ x(t)-E_\alpha\left(-\frac{\alpha}{1-\alpha}t^\alpha\right)x(0)\right. \\
{} & \left.-\frac{\alpha}{1-\alpha}\int_0^t E_\alpha\left(-\frac{\alpha}{1-\alpha}(t-s)^\alpha\right)x(s)\mathrm{d}s\right], 
\end{split}
\label{novds2} \end{equation}
for Lebesgue integrable functions $x$. However, since there is an evaluation at $0$, continuity of $x$ at $t=0$ is needed for uniqueness of the fractional derivative. In that case, the operators~\eqref{novds1} and~\eqref{novds2} still tend to $0$ when $t\rightarrow0^+$, and the inconsistency persists. Besides, the issue of the units time$^{0}$ is present again. There is no derivative in~\eqref{novds1} and~\eqref{novds2}, which contradicts the notion of fractional \textit{derivative}; indeed, some rate of change should be involved in the formulation.

In terms of explicit or closed-form solutions, a disadvantage of the rescaling of fractional operators in differential equations is that the applicability of the Laplace-transform method is reduced, because the transform of the product or division is not amenable to computing. Nonetheless, the power-series technique may still be of application, as one can find the power series of a product easily.

\section{A new operator with memory effects using probability theory} \label{new_sec_memo}

This part of the paper generalizes normalized operators by employing probability concepts. A theory on the new operators is built.

\subsection{Definition} \label{subs_def_WW}

By generalizing the probabilistic interpretation of the L-fractional derivative, see~\eqref{memo_L}, we define the new linear operator
\begin{equation}
 \mathcal{D}x(t)=\mathbb{E}[x'(tW)],
 \label{new_fr_derii}
\end{equation}
where $t\in [0,T]$, $x:[0,T]\rightarrow\mathbb{C}$ is (at least) an absolutely continuous function, and $W$ is a random variable with the requirements:
\begin{equation}
 \mathrm{support}(W)\subseteq [0,1],
 \label{suportW}
\end{equation}
\begin{equation}
 \|W\|_\infty=1,
 \label{normW}
\end{equation}
and
\begin{equation}
 \lim_{n\rightarrow\infty} n\mathbb{E}[W^n]=\infty.
 \label{limitWW}
\end{equation}
The notation $\|\cdot\|_\infty$ is the essential supremum of the random variable. Both~\eqref{suportW} and~\eqref{normW} are related to the need of capturing the past history of $x'$ until the present, along $[0,t]$, i.e., exhibiting memory effects. Physically, the new operator~\eqref{new_fr_derii} is a weighted average of the velocity on $[0,t]$, where the weight is modeled with a probability distribution. The third condition~\eqref{limitWW} will be required later, when dealing with differential equations and series, so that the new ``exponential'' function exists on $\mathbb{R}$. Observe that~\eqref{limitWW} implies~\eqref{normW}, but we explicitly write~\eqref{normW} for the sake of clarity. Assumption~\eqref{limitWW} means that the past time near the present $t$ should have enough weight in the memory operator.

We observe that $x'$ exists almost everywhere. In definition~\eqref{new_fr_derii}, we are assuming that
\begin{equation} \mathbb{E}[|x'(tW)|]<\infty. \label{eperr_fini} \end{equation}
For example,~\eqref{eperr_fini} follows if $x'$ is essentially bounded on $[0,T]$, i.e., $\|x'\|_\infty<\infty$.

Some examples of $\mathcal{D}$ are the following:
\begin{itemize}
\item When $W=1$ is constant, then $\mathcal{D}x=x'$ is the classical derivative. Note that the three assumptions~\eqref{suportW}, \eqref{normW} and~\eqref{limitWW} hold.
\item When $W\sim \mathrm{Beta}(1,1-\alpha)$, for $\alpha\in (0,1)$, then $\mathcal{D}x={}^L\! D^\alpha x$ is the L-fractional derivative. The third condition~\eqref{limitWW} is satisfied:
\begin{align*}
\lim_{n\rightarrow\infty} n\mathbb{E}[W^n]= {} & \lim_{n\rightarrow\infty} n \frac{\Gamma(2-\alpha)\Gamma(n)}{\Gamma(n+1-\alpha)} \\
= {} & \lim_{n\rightarrow\infty} n \frac{\Gamma(2-\alpha)}{n^{1-\alpha}} \\
= {} & \infty.
\end{align*}
\item If $W\sim \mathrm{Beta}(\alpha,\beta)$, for $\alpha,\beta\in (0,1)$, then we obtain a generalization of the L-fractional derivative~\cite{arx_jo}, denoted as ${}^{GL}\! D^{\alpha,\beta}$. The condition~\eqref{limitWW} is verified as follows:
\begin{equation}
\begin{split}
\lim_{n\rightarrow\infty} n\mathbb{E}[W^n]= {} & \lim_{n\rightarrow\infty} n \frac{\Gamma(\alpha+\beta)\Gamma(\alpha+n-1)}{\Gamma(\alpha)\Gamma(\alpha+n-1+\beta)} \\
= {} & \lim_{n\rightarrow\infty} n \frac{\Gamma(\alpha+\beta)}{\Gamma(\alpha)(\alpha+n-1)^\beta} \\
= {} & \infty.
\end{split}
 \label{label_limitBet}
\end{equation}

Regarding~\eqref{eperr_fini}, simple computations for integrals yield
\begin{equation}
\begin{split}
 {}^{GL}\! D^{\alpha,\beta}x(t)= {} & \frac{\Gamma(\alpha+\beta)}{\Gamma(\alpha)\Gamma(\beta)}\int_0^1 s^{\alpha-1}(1-s)^{\beta-1} x'(ts)\mathrm{d}s \\
= {} & \frac{\Gamma(\alpha+\beta)}{\Gamma(\alpha)\Gamma(\beta)}\frac{1}{t}\int_0^t \frac{s^{\alpha-1}}{t^{\alpha-1}}\left(1-\frac{s}{t}\right)^{\beta-1}x'(s)\mathrm{d}s \\
= {} & \frac{\Gamma(\alpha+\beta)}{\Gamma(\alpha)\Gamma(\beta)}\frac{1}{t^{\alpha+\beta-1}}\int_0^t (t-s)^{\beta-1}s^{\alpha-1}x'(s)\mathrm{d}s \\
= {} & \frac{1}{t^{\alpha+\beta-1}} \left(\frac{\Gamma(\alpha+\beta)}{\Gamma(\alpha)\Gamma(\beta)} t^{\beta-1}\ast (t^{\alpha-1}x')\right)(t),
\end{split}
\label{GL_forma_funci}
\end{equation}
for $t>0$. The density function of the beta distribution and the expression for the expectation have been used. Thus, if $x$ is absolutely continuous and 
\begin{equation} t^{\alpha-1}x'\in\mathrm{L}^1[0,T], \label{que_babd} \end{equation}
then ${}^{GL}\! D^{\alpha,\beta}x$ exists almost everywhere and belongs to $\mathrm{L}^1[0,T]$ (recall that a convolution $\mathcal{K}\ast y$ is well defined almost everywhere and belongs to $\mathrm{L}^1[0,T]$ if $\mathcal{K},y\in\mathrm{L}^1[0,T]$). Note that, if $x'$ is essentially bounded on $[0,T]$, that is, $\|x'\|_\infty<\infty$, the integrability condition~\eqref{que_babd} follows. The point $t=0$ is not a problem, because
\[ {}^{GL}\! D^{\alpha,\beta}x(t)=\mathbb{E}[y'(0\cdot W)]=y'(0) \]
is well defined. Note that~\eqref{GL_forma_funci} clearly extends the integral expression for the L-fractional operator, thus modifying the Caputo operator as well.

Based on the L-fractional derivative, when $\alpha\rightarrow 1^-$, $\beta\rightarrow0^+$ and $x$ is continuously differentiable on $[0,T]$, the ordinary derivative operator is retrieved. If $\alpha\rightarrow 1^-$ and $\beta\rightarrow1^-$, then the mean value
\[ \frac{1}{t}\int_0^t x'(s)\mathrm{d}s \]
is obtained.

\item If $W\sim\mathrm{Uniform}(0,1)$, i.e., $\mathrm{Beta}(1,1)$, then~\eqref{limitWW} does not hold: by~\eqref{label_limitBet},
\[ \lim_{n\rightarrow\infty} n\mathbb{E}[W^n]=1. \]
In consequence, that simple probability distribution is not valid. This is an interesting example, because the uniform distribution should be a good option a priori, as it maximizes the Shannon entropy (the ignorance) on $[0,t]$ when multiplied by $t$ if there is no information available on the weight~\cite{sema}.

\item Important cases related with the gamma and exponential distributions will be given in Example~\ref{ex_m_isk} and Example~\ref{ex_m_isk2}, in the context of inverse operators and the fundamental theorem of calculus. New operators with memory will be obtained.
\end{itemize}

Given~\eqref{new_fr_derii}, one can define the concept of differential equation as
\begin{equation}
 \mathcal{D}x(t)=f(t,x(t)).
 \label{ode_DDDD}
\end{equation}
In general, we work in dimension $d\geq1$, with $x:[0,T]\rightarrow\mathbb{C}^d$. Equation~\eqref{ode_DDDD} has an initial condition or state $x(0)=x_0\in\mathbb{C}^d$, where $f:[0,T]\times \Omega\subseteq [0,T]\times\mathbb{R}^d\rightarrow\mathbb{R}^d$, or $f:[0,T]\times \Omega\subseteq [0,T]\times\mathbb{C}^d\rightarrow\mathbb{C}^d$, is a continuous function such that $x_0\in\Omega$. The units in~\eqref{ode_DDDD} are time$^{-1}$.

Compared to an ordinary differential equation $x'(t)=f(t,x(t))$, now we are relating the weighted average of the velocity on $[0,t]$ and the position at $t$. In differential form,
\[ \mathbb{E}[x(tW+\mathrm{d}t)]=\mathbb{E}[x(tW)]+f(t,x(t))\mathrm{d}t. \]
The connection between the past $[0,t]$ and the future $(t,t+\mathrm{d}t]$ is stronger than in the ordinary case, similar to the difference between non-Markovian and Markovian processes. If $x$ is changed at $t$, then this has an effect on the whole history of $x$, not just on $(t,t+\mathrm{d}t]$. In this work, we deal with~\eqref{ode_DDDD} abstractly.

\subsection{Power series and Mittag-Leffler-type function} \label{subs_pankds}

We study the differentiation rules of power series.

\begin{Proposition} \label{propisa}
Consider the operator $\mathcal{D}$ from Section~\ref{subs_def_WW}. If
\begin{equation} x(t)=\sum_{n=0}^\infty x_n t^n \label{sumsid} \end{equation}
is a convergent power series on $[0,T]$, where $x_n\in\mathbb{C}^d$, then
\begin{equation} \mathcal{D}x(t)=\sum_{n=0}^\infty x_n \mathcal{D}t^n=\sum_{n=0}^\infty x_{n+1}(n+1)\mathbb{E}[W^n]t^n, \label{eachIjfis} \end{equation}
pointwise on $[0,T]$.
\end{Proposition}
\begin{proof}
Since $x$ is bounded on $[0,T]$, condition~\eqref{eperr_fini} is met and $\mathcal{D}x$ exists pointwise on $[0,T]$. Let 
\[ x_N(t)=\sum_{n=0}^N x_n t^n \]
be the truncated sum of~\eqref{sumsid}. It is well-known that
\[ \lim_{N\rightarrow\infty} \| x_N-x\|_\infty=0 \]
and
\[ \lim_{N\rightarrow\infty} \| x_N'-x'\|_\infty=0 \]
on $[0,T]$. Also, by linearity,
\[ \mathcal{D} x_N(t)=\sum_{n=0}^N x_n \mathcal{D} t^n. \]
Then, from the definition~\eqref{new_fr_derii} for $\mathcal{D}$, we derive
\[ \|\mathcal{D} x-\mathcal{D} x_N\|_\infty \leq \| x'-x_N'\|_\infty\stackrel{n\rightarrow\infty}{\longrightarrow}0. \]
Thus, the first equality of~\eqref{eachIjfis} is proved. For the second equality in~\eqref{eachIjfis}, just compute
\[ \mathcal{D}t^n=nt^{n-1}\mathbb{E}[W^{n-1}], \]
for $n\geq1$.
\end{proof}

In the context of Section~\ref{subs_def_WW}, we define the new Mittag-Leffler-type function
\begin{equation} \mathcal{E}(s)=\sum_{n=0}^\infty \frac{s^n}{n! \prod_{j=1}^{n-1} \mathbb{E}[W^j]}, \label{mlf_kon}
\end{equation}
for $s\in\mathbb{C}$. The empty product is interpreted, as usual, as $1$. Of course, when $W\sim\mathrm{Beta}(1,1-\alpha)$, we have the L-fractional derivative and the Mittag-Leffler-type function~\eqref{mlf2} is retrieved, after computing the moments of that beta distribution. When $W=1$, the usual exponential function is obtained. Observe that~\eqref{mlf_kon} is convergent on $\mathbb{C}$, by the ratio test and condition~\eqref{limitWW}:
\[ \lim_{n\rightarrow\infty} \frac{n! \prod_{j=1}^{n-1} \mathbb{E}[W^j]}{(n+1)! \prod_{j=1}^{n} \mathbb{E}[W^j]}=\lim_{n\rightarrow\infty} \frac{1}{(n+1)\mathbb{E}[W^n]}=0. \]
The same reasoning can be conducted for matrix arguments $s=A\in\mathbb{C}^{d\times d}$. The function~\eqref{mlf_kon} is key to solve linear models.

\begin{Proposition}
Consider the operator $\mathcal{D}$ from Section~\ref{subs_def_WW}. If $A\in\mathbb{C}^{d\times d}$ and $x_0\in\mathbb{C}^d$, then 
\[ x(t)=\mathcal{E}(tA)x_0 \]
solves
\[ \mathcal{D}x(t)=Ax(t) \]
with $x(0)=x_0$, pointwise on $[0,\infty)$.
\end{Proposition}
\begin{proof}
By Proposition~\eqref{propisa},
\begin{align*}
 \mathcal{D}x(t)= {} & \sum_{n=0}^\infty A^{n+1}x_0 \frac{(n+1)\mathbb{E}[W^n]}{(n+1)! \prod_{j=1}^{n} \mathbb{E}[W^j]}t^n \\
= {} & A \sum_{n=0}^\infty A^{n}x_0 \frac{1}{n! \prod_{j=1}^{n-1} \mathbb{E}[W^j]}t^n \\
= {} & Ax(t).
\end{align*}
\end{proof}

\subsection{Fundamental theorem of calculus} \label{subsec_conjesdz}

Let $W$ be a fixed random variable satisfying~\eqref{suportW}, \eqref{normW} and~\eqref{limitWW}. Suppose that there exists another random variable $V$, independent of $W$ and with support in $[0,1]$, such that 
\begin{equation} W V=U\sim\mathrm{Uniform}(0,1). \label{taabbs} \end{equation}
For example, when $W\sim\mathrm{Beta}(1,1-\alpha)$ in L-fractional calculus, $\alpha\in (0,1)$, then $V\sim\mathrm{Beta}(2-\alpha,\alpha)$, because
\begin{align*}
\mathbb{E}[U^n]= {} & \mathbb{E}[ W^n]\mathbb{E}[V^n] \\
= {} & \frac{\Gamma(1+n)\Gamma(1-\alpha)}{\Gamma(2+n-\alpha)}\frac{\Gamma(2-\alpha)}{\Gamma(1)\Gamma(1-\alpha)} \\
 {} & \times \frac{\Gamma(2-\alpha+n)\Gamma(\alpha)}{\Gamma(2+n)}\frac{\Gamma(2)}{\Gamma(2-\alpha)\Gamma(\alpha)} \\
= {} & \frac{1}{1+n}
\end{align*}
are the moments of a $\mathrm{Uniform}(0,1)$ distribution (which uniquely determine it).

\begin{Remark} \label{tamifns} \normalfont
In general, the structure of $V$ in~\eqref{taabbs} is not simple. Let $W\sim\mathrm{Beta}(\alpha,\beta)$ with $0<\alpha,\beta<1$. Suppose that $V\sim\mathrm{Beta}(\tilde{\alpha},\tilde{\beta})$. Equate moments
\[ \mathbb{E}[V^r]=\frac{\mathbb{E}[U^r]}{\mathbb{E}[W^r]} \]
for $r\geq1$:
\begin{equation} \frac{\Gamma(\tilde{\alpha}+r)\Gamma(\tilde{\alpha}+\tilde{\beta})}{\Gamma(\tilde{\alpha}+r+\tilde{\beta})\Gamma(\tilde{\alpha})}=\frac{\Gamma(\alpha)\Gamma(\alpha+\beta+r)}{\Gamma(\alpha+\beta)(1+r)\Gamma(\alpha+r)}. \label{endksld} \end{equation}
Asymptotically, the left-hand side of~\eqref{endksld} is of the form
\[ \frac{1}{(\tilde{\alpha}+r)^{\tilde{\beta}}}\frac{\Gamma(\tilde{\alpha}+\tilde{\beta})}{\Gamma(\tilde{\alpha})}, \]
while the right-hand side of~\eqref{endksld} is
\[ \frac{\Gamma(\alpha)}{\Gamma(\alpha+\beta)}\frac{(\alpha+r)^\beta}{1+r}. \]
In consequence,
\[ \tilde{\beta}=1-\beta\in (0,1) \]
and
\begin{equation} \frac{\Gamma(\tilde{\alpha}+1-\beta)}{\Gamma(\tilde{\alpha})}=\frac{\Gamma(\alpha)}{\Gamma(\alpha+\beta)}. \label{impooads} \end{equation}
Since $\alpha,\beta\in (0,1)$, we know that 
\begin{equation}
\Gamma(\alpha)>\Gamma(\alpha+\beta). 
\label{impooads22} 
\end{equation}
If $\tilde{\alpha}\in (0,1]$, then we would have $\Gamma(\tilde{\alpha})>\Gamma(\tilde{\alpha}+1-\beta)$, but this is impossible considering~\eqref{impooads} and~\eqref{impooads22}. Hence
\[ \tilde{\alpha}> 1. \]
On the other hand,~\eqref{endksld} becomes
\begin{equation} \frac{\Gamma(\tilde{\alpha}+r)}{\Gamma(\tilde{\alpha}+r+1-\beta)}=\frac{\Gamma(\alpha+\beta+r)}{(1+r)\Gamma(\alpha+r)}, \label{eruirskj} \end{equation}
for all $r\geq1$. Let $\mu=\tilde{\alpha}-\beta$. Observe that 
\[ (1+r)\Gamma(\alpha+r)> (\alpha+r)\Gamma(\alpha+r)=\Gamma(\alpha+r+1), \]
so, from~\eqref{eruirskj},
\[ \frac{\Gamma(\mu+r+\beta)}{\Gamma(\mu+r+1)}<\frac{\Gamma(\alpha+r+\beta)}{\Gamma(\alpha+r+1)}. \]
Since $\Gamma$ increases faster and faster as $r$ grows, we deduce that $\mu>\alpha$, that is,
\[ \tilde{\alpha}>\min\{1,\alpha+\beta\}. \]
Computations in software show that, if $\alpha=0.99$ and $\beta=0.5$, for example, then the unique root of~\eqref{eruirskj} for $r=1$ is $\tilde{\alpha}\approx 1.51284$, whereas for $r=2$ it is $\tilde{\alpha}\approx 1.51183$, which is distinct; therefore, $V$ cannot follow the beta distribution. For $\alpha=1$ and $\beta\in (0,1)$, we are in the situation of L-fractional calculus, where $V$ is indeed beta distributed. The root $\tilde{\alpha}$ in that case is unique.
\end{Remark}

An open problem regarding~\eqref{taabbs} is the following:

\begin{conjecture} \label{conj_WWW}
Let $W$ be a random variable satisfying~\eqref{suportW}, \eqref{normW} and~\eqref{limitWW}. Then, under certain conditions on $W$, there exists a random variable $V$, independent of $W$ and with support in $[0,1]$, such that~\eqref{taabbs} holds. Note that, from
\begin{equation} -\log W-\log V=-\log U\sim\mathrm{Exponential}(1), \label{djsfkoeokks} \end{equation}
we essentially need to decompose $\mathrm{Exponential}(1)$ as a sum of non-negative and independent random variables, where 
\[ \tilde{W}=-\log W \]
is previously fixed with the properties 
\[ \mathbb{P}[\tilde{W}\in [0,\epsilon)]>0 \text{ for all }\epsilon>0, \]
where $\mathbb{P}$ is the probability measure, and 
\[ \lim_{n\rightarrow\infty} n\mathbb{E}[\mathrm{e}^{-n \tilde{W}}]=0. \]
\end{conjecture}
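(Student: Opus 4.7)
The plan is to reformulate the multiplicative decomposition $WV=U$ as a moment-matching problem and then appeal to the Hausdorff moment theorem, with complete monotonicity of a certain explicit sequence as the central technical obstruction.

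First, I would exploit the fact that for independent non-negative random variables, the Mellin transform of a product factorizes. Since $\mathbb{E}[U^n]=1/(n+1)$ for $U\sim\mathrm{Uniform}(0,1)$, the identity $\mathbb{E}[U^n]=\mathbb{E}[W^n]\,\mathbb{E}[V^n]$ forces the candidate moments
\[ m_n := \mathbb{E}[V^n] = \frac{1}{(n+1)\,\mathbb{E}[W^n]}, \qquad n\geq 0. \]
Assumption~\eqref{normW} gives $m_0=1$, assumption~\eqref{limitWW} gives $m_n\to 0$, and each $m_n\in (0,1]$, so the sequence has the qualitative shape of the moments of a $[0,1]$-valued random variable.

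Next, I would invoke the Hausdorff moment theorem: the sequence $(m_n)_{n\geq 0}$ arises from a probability measure on $[0,1]$ if and only if it is completely monotone, i.e.
\[ \sum_{j=0}^{k}\binom{k}{j}(-1)^j m_{n+j}\geq 0 \quad \text{for every } n,k\geq 0. \]
If these inequalities can be established under suitable hypotheses on $W$, then the Hausdorff construction furnishes a (unique, by moment determinacy on $[0,1]$) probability measure with these moments, which I would take as the law of $V$ on a product space independent of $W$. Matching integer moments in $\mathbb{E}[(WV)^n]=\mathbb{E}[U^n]$ then upgrades to equality in distribution $WV\sim U$, completing the proof.

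An equivalent route, suggested by the hint~\eqref{djsfkoeokks}, is to work on the log-scale: with $\tilde{V}=-\log V$ and $\tilde{W}=-\log W$, one asks that
\[ \Phi(\lambda):=\frac{1}{(1+\lambda)\,\mathbb{E}[\mathrm{e}^{-\lambda\tilde{W}}]} \]
be completely monotone in $\lambda>0$, in which case Bernstein's theorem produces $\tilde{V}$ as the Laplace transform of a probability measure on $[0,\infty)$. This frames the conjecture inside infinite divisibility theory: we ask whether $\tilde{W}$ is a \emph{convolution divisor} of $\mathrm{Exponential}(1)$.

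The main obstacle — and the reason the statement remains a conjecture — is that complete monotonicity is not preserved under quotients. Even though $1/(1+\lambda)$ and $\mathbb{E}[\mathrm{e}^{-\lambda\tilde{W}}]$ are separately completely monotone, their quotient in general is not, and there is no a priori reason for the explicit sequence $1/((n+1)\mathbb{E}[W^n])$ to satisfy all the Hausdorff inequalities. Remark~\ref{tamifns} already exhibits this delicacy: within the beta family, the candidate $V$ typically cannot be chosen beta outside the L-fractional regime, indicating that when $V$ exists it rarely has a simple closed form. A successful proof will therefore need either a structural sufficient condition on $W$ (for instance, that $\tilde{W}$ is a generalized gamma convolution or lies in the Bondesson class), or an explicit construction via a Wiener--Hopf-type factorization of $\mathrm{Exponential}(1)$ adapted to the given $\tilde{W}$; conversely, a counterexample satisfying only~\eqref{suportW}--\eqref{limitWW} would sharpen rather than refute the conjecture.
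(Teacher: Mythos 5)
The statement you are addressing is explicitly a \emph{conjecture}: the paper offers no proof of it. What the paper does provide, in the ``Comments on Conjecture~\ref{conj_WWW}'' and the surrounding examples, is exactly the set of reformulations you rediscover --- the candidate moments $\mathbb{E}[V^r]=1/((1+r)\mathbb{E}[W^r])$, the ratio of characteristic functions $\varphi_{\mathrm{Ex}(1)}/\varphi_{\tilde W}$, and the deconvolution viewpoint --- together with two positive instances (the gamma and the Bernoulli--exponential mixtures of Examples~\ref{ex_m_isk} and~\ref{ex_m_isk2}) and one negative instance (Example~\ref{ex_m_isk33}, where the candidate characteristic function has modulus exceeding $1$), which is precisely why the conjecture carries the caveat ``under certain conditions on $W$.'' Your plan is therefore consistent with the paper rather than a proof of anything: it sharpens the moment formulation by naming the exact criterion (complete monotonicity of $(m_n)$ via the Hausdorff moment theorem, or equivalently complete monotonicity of $\Phi(\lambda)$ via Bernstein), correctly notes that determinacy of the Hausdorff problem upgrades moment matching to equality in law, and adds a genuinely useful pointer the paper does not have, namely that sufficient conditions should be sought in the theory of convolution divisors of the exponential law (Bondesson class, generalized gamma convolutions). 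You also correctly identify the obstruction --- quotients of completely monotone functions need not be completely monotone --- which is the abstract form of the paper's Example~\ref{ex_m_isk33}.

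One concrete slip: you assert that each $m_n=1/((n+1)\mathbb{E}[W^n])$ lies in $(0,1]$, so that the sequence ``has the qualitative shape'' of a moment sequence on $[0,1]$. That is not implied by~\eqref{suportW}--\eqref{limitWW}. Take $W$ with $\mathbb{P}[W{=}1]=p$ and $\mathbb{P}[W{=}0]=1-p$ for $0<p<1/2$: all three hypotheses hold (in particular $n\mathbb{E}[W^n]=np\to\infty$), yet $m_1=1/(2p)>1$, which no $[0,1]$-valued $V$ can achieve. So $(n+1)\mathbb{E}[W^n]\geq 1$ for all $n$ is already a nontrivial necessary condition on $W$, and this gives an even more elementary family of counterexamples than the paper's characteristic-function one. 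Beyond that, your proposal is an accurate map of the problem, but it establishes no new sufficient condition and hence leaves the conjecture exactly where the paper leaves it: open.
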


\textit{Comments on Conjecture~\ref{conj_WWW}:} Note that~\eqref{taabbs} implies that 
\[ \mathbb{E}[W^r]\mathbb{E}[V^r]=\mathbb{E}[U^r]=\frac{1}{1+r}, \]
i.e.,
\begin{equation} \mathbb{E}[V^r]=\frac{1}{(1+r)\mathbb{E}[W^r]}. \label{momeVVV} \end{equation}
The question is whether there exists $V$ with those specific moments~\eqref{momeVVV}. On the other hand, working with~\eqref{djsfkoeokks} and characteristic functions, the point is whether
\begin{equation} \varphi(u)=\frac{\varphi_{\mathrm{Ex}(1)}(u)}{\varphi_{\tilde{W}}(u)} \label{ratte} \end{equation}
is a characteristic function of a non-negative random variable. Finally, dealing with~\eqref{djsfkoeokks} and density functions, the question is whether the convolution
\[ (\rho_{\tilde{W}}\ast \rho_{-\log V})(t)=\rho_{\mathrm{Ex}(1)}(t)=\mathrm{e}^{-t},\quad t>0, \]
is invertible, i.e., computing the deconvolution of functions.

\begin{Example} \label{ex_m_isk} \normalfont
Let us see a positive example for Conjecture~\ref{conj_WWW}, distinct to the L-fractional context. Consider 
\[ \tilde{W}=-\log W\sim\mathrm{Gamma}(\alpha,1) \]
and 
\[ \tilde{V}=-\log V\sim\mathrm{Gamma}(1-\alpha,1), \]
independent, where $0<\alpha<1$ is the shape and $1$ is the rate. Then~\eqref{djsfkoeokks} holds. Besides, for 
\begin{equation} W=\mathrm{e}^{-\tilde{W}}\sim \mathrm{e}^{-\mathrm{Gamma}(\alpha,1)}, \label{tenc_bril} \end{equation}
condition~\eqref{limitWW} is true, from the moment-generating function of the gamma distribution:
\begin{align*}
 \lim_{n\rightarrow\infty} n\mathbb{E}[W^n]= {} & \lim_{n\rightarrow\infty} n\mathbb{E}[\mathrm{e}^{-n\tilde{W}}] \\
= {} & \lim_{n\rightarrow\infty} \frac{n}{(1+n)^\alpha}=\infty.
\end{align*}
In conclusion, $W$ defined by~\eqref{tenc_bril} is a suitable random variable to define a fractional operator~\eqref{new_fr_derii}, of one parameter $\alpha$, and its used in modeling shall be investigated. When $\alpha\rightarrow 1^-$, the ordinary derivative is obtained. As now will be seen,~\eqref{new_fr_derii_J2} becomes its inverse integral operator $\mathcal{J}$. From the densities
\[ \rho_W(w)=\rho_{\tilde{W}}(-\log w)\frac{1}{w}=\frac{1}{\Gamma(\alpha)}(-\log w)^{\alpha-1}, \]
the explicit expressions of the fractional operators are the following:
\begin{align*}
 \mathcal{D}x(t)= {} & \frac{1}{\Gamma(\alpha)t}\int_0^t \left(-\log \frac{s}{t}\right)^{\alpha-1}x'(s)\mathrm{d}s \\
= {} & \frac{1}{\Gamma(\alpha)t}\int_0^t \left(\log \frac{t}{s}\right)^{\alpha-1}x'(s)\mathrm{d}s
\end{align*}
and
\begin{align*}
 \mathcal{J}x(t)= {} & \frac{1}{\Gamma(1-\alpha)}\int_0^t \left(-\log \frac{s}{t}\right)^{-\alpha}x(s)\mathrm{d}s \\
= {} & \frac{1}{\Gamma(1-\alpha)}\int_0^t \left(\log \frac{t}{s}\right)^{-\alpha}x(s)\mathrm{d}s. 
\end{align*}
\end{Example}

\begin{Example} \label{ex_m_isk2} \normalfont
Let us see another valid distribution for Conjecture~\ref{conj_WWW}, distinct to the gamma. Consider
\[ \tilde{W}=-\log W\sim\mathrm{Bernoulli}(1/2) \cdot \mathrm{Exponential}(1) \]
and
\[ \tilde{V}=-\log V\sim \frac12 \mathrm{Exponential}(1), \]
independent. Recall that $X\sim \mathrm{Bernoulli}(1/2)$ if it takes the values $0$ and $1$ with probability $1/2$, respectively, and $Y\sim \mathrm{Exponential}(1)$ if its density function is $\mathrm{e}^{-y}$. Observe that $\tilde{W}$ is not absolutely continuous and its density function is expressed in terms of the Dirac delta function. The moment-generating functions are
\[ \mathbb{E}[\mathrm{e}^{\tilde{W}u}]=\frac12 \frac{1}{1-u}+\frac{1}{2}=\frac{2-u}{2-2u} \]
and
\[ \mathbb{E}[\mathrm{e}^{\tilde{V}u}]=\frac{1}{1-u/2}, \]
therefore
\[ \mathbb{E}[\mathrm{e}^{(\tilde{W}+\tilde{V})u}]=\frac{2-u}{2-2u}\times \frac{1}{1-u/2}=\frac{1}{1-u}, \]
that is,~\eqref{djsfkoeokks} holds. Note that
\begin{align*}
 \lim_{n\rightarrow\infty} n\mathbb{E}[W^n]= {} & \lim_{n\rightarrow\infty} n\mathbb{E}[\mathrm{e}^{-n\tilde{W}}] \\
= {} & \lim_{n\rightarrow\infty} n\frac{2+n}{2+2n}=\infty,
\end{align*}
so~\eqref{limitWW} is verified. As now will be seen,~\eqref{new_fr_derii_J2} is the inverse operator $\mathcal{J}$ of~\eqref{new_fr_derii}. The explicit expressions of the operators are
\begin{align*}
 \mathcal{D}x(t)= {} & \mathbb{E}[x'(t\mathrm{e}^{-\tilde{W}})] \\
 = {} & \frac12 \mathbb{E}[x'(tU)]+\frac12 x'(t)=\frac12 \frac{x(t)-x(0)}{t}+\frac{1}{2}x'(t)
\end{align*}
and
\begin{align*}
 \mathcal{J}x(t)= {} & t\mathbb{E}[x(t\mathrm{e}^{-\tilde{V}})] \\
 = {} & \int_0^t x(s)\rho_{V}(s/t)\mathrm{d}s=\frac{2}{t}\int_0^t sx(s)\mathrm{d}s. 
\end{align*}
The interpretation of $\mathcal{D}x(t)$ in this case is very simple in terms of $x'$: the average between the historical mean value $\frac{1}{t}\int_0^t x'(s)\mathrm{d}s$ and the value at the present time $t$. 
\end{Example}

\begin{Example} \label{ex_m_isk33} \normalfont
We present a negative example for Conjecture~\ref{conj_WWW}, which shows that certain conditions on $W$ are required in general for the existence of $V$. Let
\begin{equation} \tilde{W}=-\log W\sim 2\cdot \mathrm{Bernoulli}(1/2) \cdot \mathrm{Exponential}(1). \label{tWWw} \end{equation}
Then
\[ \mathbb{E}[\mathrm{e}^{\tilde{W}u}]=\frac12 \frac{1}{1-2u}+\frac12=\frac{1-u}{1-2u}, \]
so
\begin{align*}
 \lim_{n\rightarrow\infty} n\mathbb{E}[W^n]= {} & \lim_{n\rightarrow\infty} n\mathbb{E}[\mathrm{e}^{-n\tilde{W}}] \\
= {} & \lim_{n\rightarrow\infty} n\frac{1+n}{1+2n}=\infty,
\end{align*}
and~\eqref{limitWW} holds. Now, we have the following ratio~\eqref{ratte} of characteristic functions:
\begin{align*}
 \varphi(u)= {} & \frac{\varphi_{\mathrm{Ex}(1)}(u)}{\varphi_{\tilde{W}}(u)} \\
= {} & \frac{\frac{1}{1-u\mathrm{i}}}{\frac{1-u\mathrm{i}}{1-2u\mathrm{i}}} \\
= {} & \frac{1-2u\mathrm{i}}{(1-u\mathrm{i})^2} \\
= {} & \frac{1-2u\mathrm{i}}{1-u^2-2u\mathrm{i}} \\
= {} & \frac{(1-2u\mathrm{i})(1-u^2+2u\mathrm{i})}{(1-u^2-2u\mathrm{i})(1-u^2+2u\mathrm{i})} \\
= {} & \frac{1+3u^2+2u^3\mathrm{i}}{(1+u^2)^2},
\end{align*}
where $\mathrm{i}$ is the imaginary unit. Then,
\[ |\varphi(u)|=\frac{\sqrt{(1+3u^2)^2+4u^6}}{(1+u^2)^2}. \]
However, numerical computations show that
\[ |\varphi(0.1)|\approx 1.00971,\quad |\varphi(0.2)|\approx 1.03561,\quad |\varphi(0.7)|\approx 1.15467. \]
This is impossible for a characteristic function, because its modulus should be less than or equal to $1$ for every $u\in\mathbb{R}$. Therefore, there is no random variable $\tilde{V}$, independent of $\tilde{W}$, that meets $\tilde{W}+\tilde{V}\sim\mathrm{Exponential}(1)$, i.e.,~\eqref{djsfkoeokks}. We emphasize that it is not necessary for $\tilde{W}$ to have a discrete part; we can take an absolutely continuous random variable $\tilde{W}_2$ that is sufficiently near to~\eqref{tWWw} in distribution, such that $\varphi_{\tilde{W}_2}(u)$ tends to $\varphi_{\tilde{W}}(u)$ (by L\'evy's continuity theorem \cite[section~18.1]{willia}), and still have $|\varphi(u)|>1$. This example is finished.
\end{Example}

By taking~\eqref{proba_L} into account, we define the associated integral operator to~\eqref{new_fr_derii} as
\begin{equation}
 \mathcal{J}x(t)=t\mathbb{E}[x(tV)],
 \label{new_fr_derii_J2}
\end{equation}
where $t\in [0,T]$, $x:[0,T]\rightarrow\mathbb{C}$ is a continuous function, and $V$ is a random variable, independent of $W$ and with support in $[0,1]$, that satisfies~\eqref{taabbs} (we assume this condition, as in the L-fractional case or Examples~\ref{ex_m_isk} and~\ref{ex_m_isk2}, for instance; in general, we would need Conjecture~\ref{conj_WWW}). 

With~\eqref{new_fr_derii_J2}, there is a version of the fundamental theorem of calculus.

\begin{Theorem} \label{ftcdcc}
If $x:[0,T]\rightarrow\mathbb{C}$ is a continuously differentiable function, then
\begin{equation} \mathcal{J}\circ \mathcal{D}x(t)=x(t)-x(0) \label{JDx_nou} \end{equation}
and
\begin{equation} \mathcal{D}\circ\mathcal{J}x(t)=x(t) \label{JDx_nou2} \end{equation}
for every $t\in [0,T]$. If $x$ is given by a power series~\eqref{sumsid}, then
\begin{equation} \mathcal{J}x(t)=\sum_{n=0}^\infty x_n \mathcal{J}t^n=\sum_{n=0}^\infty x_n \frac{t^{n+1}}{(n+1) \mathbb{E}[W^{n}]}. \label{JDX_ps} \end{equation}
\end{Theorem}
\begin{proof}
First, we note that $\mathcal{D}x\in\mathcal{C}[0,T]$ and $\mathcal{J}x\in\mathcal{C}^1[0,T]$. Indeed, if $t_1\rightarrow t_2$ in $[0,T]$, then $x'(t_1W)\rightarrow x'(t_2W)$, and since $\|x'\|_\infty<\infty$ on $[0,T]$, the dominated convergence theorem gives $\mathcal{D}x(t_1)\rightarrow \mathcal{D}x(t_2)$. For $\mathcal{J}x\in\mathcal{C}^1[0,T]$, just note that $t\mapsto \mathbb{E}[x(tV)]$ is $\mathcal{C}^1[0,T]$, because 
\[ \left|\frac{\mathrm{d}}{\mathrm{d}t} x(tV)\right|=|Vx'(tV)|\leq \|x'\|_\infty, \]
which has finite expectation; the dominated convergence theorem is employed to ensure that 
\[ \frac{\mathrm{d}}{\mathrm{d}t}\mathbb{E}[x(tV)]=\mathbb{E}\left[\frac{\mathrm{d}}{\mathrm{d}t} x(tV)\right]. \]
Thus, the conclusion of this remark is that both compositions in~\eqref{JDx_nou} and~\eqref{JDx_nou2} are well defined when $x\in\mathcal{C}^1[0,T]$. Now we aim at proving the equalities in~\eqref{JDx_nou} and~\eqref{JDx_nou2}.

For~\eqref{JDx_nou}, by using the properties of the conditional expectation,
\begin{align*}
 \mathcal{J}\circ \mathcal{D}x(t)= {} & t\mathbb{E}[\mathcal{D}x(Vt)] \\
= {} & t\mathbb{E}[\mathbb{E}_{W}[x'(WVt)]] \\
= {} & t \mathbb{E}[x'(WVt)] \\
= {} & t\mathbb{E}[x'(Ut)] \\
= {} & t\int_0^1 x'(ut)\mathrm{d}u \\
= {} & \int_0^t x'(s)\mathrm{d}s \\
= {} & x(t)-x(0).
\end{align*}

On the other hand, for~\eqref{JDx_nou2},
\begin{align*}
\mathcal{D}\circ\mathcal{J}x(t)= {} & \mathcal{D}[t\mathbb{E}_{V}[x(tV)]] \\
= {} & \mathbb{E}[ \mathbb{E}_{V}[x(tVW)]] + t\mathbb{E}[W\mathbb{E}_{V}[Vx'(tVW)]] \\
= {} & \mathbb{E}[x(tU)]+t\mathbb{E}[Ux'(tU)] \\
= {} & \int_0^1 x'(ut)\mathrm{d}u + t \int_0^1 ux'(ut)\mathrm{d}u \\
= {} & \frac{1}{t}\int_0^t x(s)\mathrm{d}s + \frac{1}{t}\int_0^t sx'(s)\mathrm{d}s \\
= {} & \frac{1}{t}\int_0^t x(s)\mathrm{d}s + \frac{1}{t}\left(tx(t)-\int_0^t x(s)\mathrm{d}s\right) \\
= {} & x(t).
\end{align*}
Integration by parts has been used in the last line.

Finally, for~\eqref{JDX_ps}, observe that $\mathcal{J}$ is a continuous operator from $\mathcal{C}[0,T]$ into $\mathcal{C}[0,T]$: if 
\[ \|x-y\|_\infty\rightarrow0, \]
then 
\[ \|\mathcal{J}x-\mathcal{J}y\|_\infty \leq T\|x-y\|_\infty\rightarrow0. \]
Hence~\eqref{JDX_ps} holds, considering that
\[ \mathcal{J}t^n=t\mathbb{E}[t^nV^n]=t^{n+1}\frac{\mathbb{E}[U^n]}{\mathbb{E}[W^n]}=\frac{t^{n+1}}{(n+1)\mathbb{E}[W^n]}. \]
\end{proof}

We give a version of Picard's theorem for~\eqref{ode_DDDD}, based on fixed-point theory. 

\begin{Theorem} \label{ara_escds22}
If $f:[0,T]\times\Omega\subseteq [0,T]\times\mathbb{R}^d\rightarrow \mathbb{R}^d$ is continuous and is locally Lipschitz continuous with respect to the second variable (i.e., for every $(t_1,x_1)\in [0,T]\times\Omega$, it is Lipschitz on a neighborhood of it), then the integral problem associated to the differential equation~\eqref{ode_DDDD}, 
\begin{equation} x(t)=x_0+\mathcal{J}f(t,x(t))=x_0+t\mathbb{E}[f(tV,x(tV))], \label{nananadas} \end{equation}
presents local existence and uniqueness of solution in the set of continuous functions. Specifically, if 
\[ Q=[0,a]\times \overline{B}(x_0,b)\subseteq [0,T]\times\Omega \]
is a rectangle where $f$ is $M$-Lipschitz and $\overline{B}(x_0,b)$ is the closed ball of $\|\cdot\|_\infty$-radius $b$ centered at $x_0$, then an interval of definition of the solution is $[0,T^\ast]$, such that $T^\ast\leq a$ and
\[ T^\ast<\min\left\{\frac{b}{\|f\|_\infty},\frac{1}{M}\right\}, \]
where $\|f\|_\infty$ is the maximum of $f$ on $Q$. Finally, if the solution of~\eqref{nananadas} is continuously differentiable, then it solves~\eqref{ode_DDDD} locally, by Theorem~\ref{ftcdcc}.
\end{Theorem}
\begin{proof}
Consider the Banach space
\[ \Theta=\{y\in\mathcal{C}[0,T^\ast]:\,\|y-x_0\|_\infty\leq b\}. \]
Let
\[ \Lambda:\Theta\rightarrow\Theta, \]
\[ \Lambda x(t)=x_0+t\mathbb{E}[f(tV,x(tV))]. \]

Given $x\in\Theta$, if $t_1\rightarrow t_2$ in $[0,T^\ast]$, then
\[ f(t_1V,x(t_1V))\rightarrow f(t_2V,x(t_2V)) \]
almost surely by continuity, and since $f$ is bounded on $[0,T]\times x([0,T^\ast])$, the dominated convergence theorem ensures that
\[ \Lambda x(t_1)\rightarrow \Lambda x(t_2), \]
that is, $\Lambda x\in\mathcal{C}[0,T^\ast]$. On the other hand, if $t\in [0,T^\ast]$, then
\[ |\Lambda x(t)-x_0|\leq t \|f\|_\infty \leq b, \]
therefore
\[ \|\Lambda x-x_0\|_\infty\leq b \]
and $\Lambda x\in\Theta$. Hence $\Lambda$ is well defined.

Let us see that $\Lambda$ is a contraction: if ${}_1 x,{}_2 x\in\Theta$, then
\begin{align*}
 |\Lambda [{}_1 x](t)-\Lambda [{}_2 x](t)|\leq {} & t\mathbb{E}[|f(tV,{}_1 x(tV))-f(tV,{}_2 x(tV))|] \\
 \leq {} & M t\mathbb{E}[|{}_1 x(tV) - {}_2 x(tV)|] \\
\leq {} & M T^\ast \|{}_1 x - {}_2 x\|_\infty.
\end{align*}
Since $M T^\ast<1$, $\Lambda$ is a contraction and, by Banach fixed-point theorem \cite[Chapter~1]{granas}, it has a unique fixed point $x\in\Theta$.
\end{proof}

\subsection{Power-series solutions of nonlinear equations: The Cauchy-Kovalevskaya theorem}

We work in the context of Sections~\ref{subs_def_WW} and~\ref{subs_pankds}. We do not need the integral operator $\mathcal{J}$ from the previous part.

We focus on scalar problems with polynomial vector field:
\begin{equation} \mathcal{D} x=\sum_{i=0}^m a_i x^i, \label{rhsejak} \end{equation}
where $a_0,\ldots,a_m\in\mathbb{R}$. For example, a logistic vector field would be $\mu x(1-x/K)$, $\mu\in\mathbb{R}$ and $K>0$, with quadratic nonlinearity $m=2$, which has previously been studied in the Caputo sense~\cite{logistic_nieto}.

\begin{Theorem} \label{th_primer_ck}
Equation~\eqref{rhsejak} has a power-series solution, that converges on a neighborhood $[0,\epsilon)$.
\end{Theorem}
\begin{proof}
By using a power series of the form~\eqref{sumsid} and Proposition~\ref{propisa}, we have
\[ \sum_{n=0}^\infty x_{n+1}(n+1)\mathbb{E}[W^n]t^n=\sum_{n=0}^\infty \left( \sum_{i=0}^m a_i C_i(x_0,\ldots,x_n)\right)t^n, \]
where $C_i(x_0,\ldots,x_n)$ are $n$-th terms of Cauchy products of power $i$. The coefficients then satisfy
\[ x_{n+1}=\frac{1}{(n+1)\mathbb{E}[W^n]}\sum_{i=0}^m a_i C_i(x_0,\ldots,x_n),\quad x_0=x(0), \]
where $C_i(x_0,\ldots,x_n)$ are $n$-th terms of Cauchy products of power $i$. We know that there exists a constant $K>0$ such that
\[ \frac{1}{(n+1)\mathbb{E}[W^n]}\leq K \]
for all $n\geq0$, by the stronger condition~\eqref{limitWW}. Then, by the triangular inequality,
\[ |x_{n+1}|\leq K\sum_{i=0}^m |a_i| C_i(|x_0|,\ldots,|x_n|). \]
The formal majorizing series $\psi(t)=\sum_{n=0}^\infty y_n t^n$ has coefficients
\[ y_{n+1}=C\sum_{i=0}^m |a_i| C_i(y_0,\ldots,y_n),\quad y_0=|x_0|. \]
We have the following functional, algebraic equation:
\[ \psi(t)=y_0+t\sum_{n=0}^\infty y_{n+1}t^n=y_0+Kt\sum_{i=0}^m |a_i| \psi(t)^i. \]
With 
\[ \phi(t,w)=w-y_0-Kt\sum_{i=0}^m |a_i| w^i,\quad (t,w)\in\mathbb{R}^2, \]
one has $\phi(0,y_0)=0$ and $\frac{\partial \phi}{\partial w}(0,y_0)=1\neq0$. The implicit-function theorem  \cite[Section~8, Chapter~0]{liib} shows that there exists a unique analytic function $w\equiv w(z)$ on a neighborhood of zero such that $w(0)=y_0$ and $\phi(z,w(z))=0$. This implies that $w(z)=\psi(z)$ is analytic at $0$.
\end{proof}

\begin{Example} \label{ex_SiR_eq} \normalfont
Fractional differential equations have been used to extend customary epidemic models~\cite{area_sirr}. We consider a generalization of the SIR (susceptible-infected-recovered) model,
\begin{equation}
 \begin{cases} \mathcal{D} S=-\beta SI, \\ \mathcal{D} I=\beta SI-\gamma I, \\ \mathcal{D} R=\gamma I, \end{cases} 
 \label{sir_clas_L}
\end{equation}
where $\beta>0$ and $\gamma>0$ control the infection and the recovery rates, respectively. With the form~\eqref{sumsid} and the differentiation rule from Proposition~\ref{propisa}, one has
\[ \mathcal{D} S(t)=\sum_{n=0}^\infty s_{n+1}(n+1)\mathbb{E}[W^n]t^{n}, \]
\[ \mathcal{D} I(t)=\sum_{n=0}^\infty i_{n+1}(n+1)\mathbb{E}[W^n]t^{n}, \]
\[ \mathcal{D} R(t)=\sum_{n=0}^\infty r_{n+1}(n+1)\mathbb{E}[W^n]t^{n}. \]
On the other hand, we have the Cauchy product
\[ S(t)I(t)=\sum_{n=0}^\infty \left(\sum_{m=0}^n s_m i_{n-m}\right) t^n. \]
Then,
\begin{equation} s_{n+1}=-\frac{1}{(n+1)\mathbb{E}[W^n]}\beta\sum_{k=0}^n s_k i_{n-k}, \label{sn1} \end{equation}
\begin{equation} i_{n+1}=\frac{1}{(n+1)\mathbb{E}[W^n]}\left[\beta \sum_{k=0}^n s_k i_{n-k}-\gamma i_n\right], \label{in1} \end{equation}
\begin{equation} r_{n+1}=\frac{1}{(n+1)\mathbb{E}[W^n]}\gamma i_n \label{rn1} \end{equation}
are the coefficients of the solution. The proof of convergence with \eqref{sn1}--\eqref{rn1} is very similar to Theorem~\ref{th_primer_ck}, now adapted to the case of a system of three equations~\cite{sir_nie}. Based on~\eqref{limitWW}, we bound
\[ |s_{n+1}|\leq C\beta \sum_{k=0}^n |s_k||i_{n-k}|, \]
\[ |i_{n+1}|\leq C\beta \sum_{k=0}^n |s_k||i_{n-k}| + C\gamma|i_n|, \]
\[ |r_{n+1}|\leq C\gamma |i_n|, \]
where $C>0$ is a constant. Then we consider the majorizing sequences $\tilde{s}_0=|s_0|$, $\tilde{i}_0=|i_0|$, $\tilde{r}_0=|r_0|$,
\[ \tilde{s}_{n+1}=C\beta \sum_{k=0}^n \tilde{s}_k \tilde{i}_{n-k}, \]
\[ \tilde{i}_{n+1}=C\beta \sum_{k=0}^n \tilde{s}_k \tilde{i}_{n-k}+C\gamma\tilde{i}_n, \]
\[ \tilde{r}_{n+1}=C\gamma \tilde{i}_n. \]
If we define the formal power series
\[ \tilde{S}(t)=\sum_{n=0}^\infty \tilde{s}_n t^n, \]
\[ \tilde{I}(t)=\sum_{n=0}^\infty \tilde{i}_n t^n, \]
\[ \tilde{R}(t)=\sum_{n=0}^\infty \tilde{r}_n t^n, \]
then
\[ \tilde{S}(t)=|s_0|+Ct\beta \tilde{S}(t)\tilde{I}(t), \]
\[ \tilde{I}(t)=|i_0|+Ct(\beta \tilde{S}(t)\tilde{I}(t)+\gamma\tilde{I}(t)),\]
\[ \tilde{R}(t)=|r_0|+Ct\gamma\tilde{I}(t). \]
The analytic function
\[ \phi:\mathbb{R}^4\rightarrow\mathbb{R}^3, \]
\[ (t,x,y,z)\mapsto (x-|s_0|-Ctxy,y-|i_0|-Ct(\beta xy+\gamma y),z-|r_0|-Cty) \]
satisfies
\[ \phi(0,\tilde{s}_0,\tilde{i}_0,\tilde{r}_0)=0, \]
\[ J\phi(0,\tilde{s}_0,\tilde{i}_0,\tilde{r}_0)=1, \]
where $J$ is the Jacobian with respect to $(x,y,z)$. By the implicit-function theorem, there are analytic functions $x(t)$, $y(t)$ and $z(t)$ on a neighborhood of $t=0$ such that $\phi(t,x(t),y(t),z(t))=0$ and $x(0)=\tilde{s}_0$, $y(0)=\tilde{i}_0$, $z(0)=\tilde{r}_0$. Then $\tilde{S}=x$, $\tilde{I}=y$ and $\tilde{R}=z$ are analytic functions, as wanted. Once this model~\eqref{sir_clas_L} is well-defined, further extensions of it could be investigated, such as the incorporation of a Brownian perturbation giving rise to a stochastic fractional differential equation.
\end{Example}

\section{Conclusions and open problems} \label{sec_conc}

The main novelties of the presented paper were:
\begin{itemize}
\item The definition of normalized fractional operators with non-singular kernel, for the first time, in Section~\ref{subsec2_norm}. In the literature, there have been documented deficiencies of fractional operators with bounded kernels, see~\cite{dieth}. Our work showed that a rescaling fixes the issues of these operators: inconsistency at zero, units time$^0$, and lack of fundamental theorem of calculus. Our new operators and equations are mathematically valid and could be further investigated, beyond exponential and Mittag-Leffler kernels. During the time the present paper has been in the preprint server ArXiv and under review, new works citing my suggested normalized operators are being conducted~\cite{balen_nou}.
\item The definition of a general class of fractional operators, based on a probabilistic approach, in Section~\ref{new_sec_memo}. My previous articles~\cite{arx_jo} and~\cite{aml_jo} dealt with the L-fractional derivative, defined as the normalization of the Caputo derivative. The L-fractional derivative was relevant, as it offered alternative properties: units time$^{-1}$ instead of time$^{-\alpha}$, smoothness of solutions, finite ordinary derivative at the origin, etc. These properties were reviewed in Section~\ref{subsec1_norm}. The novel Section~\ref{new_sec_memo} generalized, for the first time, the L-fractional derivative and the normalization of the Caputo operator via probability theory, by defining fractional operators with an averaged probabilistically distributed past. Many properties were stated and demonstrated: the validity and consistency of the definition, the associated Mittag-Leffler function, existence and uniqueness of solution by fixed-point theory, and an example related to the SIR model.
\end{itemize}

Besides the future research lines suggested in~\cite{arx_jo}, some open problems from the present paper are the following:
\begin{itemize}
\item The study of more properties of rescaled operators with bounded kernels. See Section~\ref{subsec2_norm}.
\item The development of more theory on operators with memory~\eqref{new_fr_derii}. Specifically, it would be of great relevance to obtain a complete resolution of Conjecture~\ref{conj_WWW}. This would better characterize when the fundamental theorem of calculus and existence-and-uniqueness results hold; see Section~\ref{subsec_conjesdz}.
\item The study of the new Mittag-Leffler-type function~\eqref{mlf_kon}: representation formulas, dynamics, asymptotic values, etc. 
\item The investigation of dynamical systems based on the new operator~\eqref{new_fr_derii}.
\item The design of numerical methods for~\eqref{ode_DDDD}. The search of applications in modeling.
\end{itemize}

\section*{Funding}
This work received no funding.

\section*{Data Availability Statement}
No data were used for this study.

\section*{Conflict of interest}
The author declares that there is no conflict of interest regarding the publication of this article.

\end{document}